\newcommand{\N}{\mathbb{N}}
\newcommand{\Z}{\mathbb{Z}}
\newcommand{\R}{\mathbb{R}}
\newcommand{\C}{\mathbb{C}}
\newcommand{\Hawaii}{Hawai\kern.05em`\kern.05em\relax i}
\theoremstyle{plain}
\newtheorem{theorem}{Theorem}[section]
\newtheorem{lemma}[theorem]{Lemma}
\newtheorem{corollary}[theorem]{Corollary}
\newtheorem{proposition}[theorem]{Proposition}
\newtheorem{definition-theorem}[theorem]{Definition / Theorem}
\newtheorem*{conjecture*}{Conjecture}
\newtheorem*{theorem*}{Theorem}
\theoremstyle{definition}
\newtheorem{definition}[theorem]{Definition}
\theoremstyle{remark}
\newtheorem*{example*}{Example}  
\newtheorem*{remark*}{Remark}
\begin{document}
\title{Bott periodicity and almost commuting matrices}
\author{Rufus Willett}

\maketitle

\begin{abstract}
We give a proof of the Bott periodicity theorem for topological $K$-theory of $C^*$-algebras based on Loring's treatment of Voiculescu's almost commuting matrices and Atiyah's rotation trick.  We also explain how this relates to the Dirac operator on the circle; this uses Yu's localization algebra and an associated explicit formula for the pairing between the first $K$-homology and first $K$-theory groups of a (separable) $C^*$-algebra.
\end{abstract}

\section{Introduction}

The aim of this note is to give a proof of Bott periodicity using Voiculescu's famous example \cite{Voiculescu:1983km} of `almost commuting' unitary matrices that cannot be approximated by exactly commuting unitary matrices.  Indeed, in his thesis \cite[Chapter I]{Loring:1985ud} Loring explained how the properties of Voiculescu's example can be seen as arising from Bott periodicity; this note explains how one can go `backwards' and use Loring's computations combined with Atiyah's rotation trick \cite[Section 1]{Atiyah:1968ek} to prove Bott periodicity.  Thus the existence of matrices with the properties in Voiculescu's example is in some sense equivalent to Bott periodicity.  

A secondary aim is to explain how to interpret the above in terms of Yu's localization algebra \cite{Yu:1997kb} and the Dirac operator on the circle.  The localization algebra of a topological space $X$ is a $C^*$-algebra $C_L^*(X)$ with the property that the $K$-theory of $C^*_L(X)$ is canonically isomorphic to the $K$-homology of $X$.  We explain how Voiculescu's matrices, and the computations we need to do with them, arise naturally from the Dirac and Bott operators on the circle using the localisation algebra.  A key ingredient is a new explicit formula for the pairing between $K$-homology and $K$-theory in terms of Loring's machinery.

We do not claim any real technical originality in this note: as will be obvious, our approach to Bott periodicity is heavily dependent on Loring's work in particular.  However, we think this approach is particularly attractive and concrete -- it essentially reduces the proof of the Bott periodicity theorem to some formal observations and a finite dimensional matrix computation -- and hope that this exposition is worthwhile from that point of view.  We also hope it is interesting insofar as it bridges a gap between different parts of the subject.   A secondary motivation comes from possible applications in the setting of controlled $K$-theory; this will be explored elsewhere, however.\\

\textbf{Acknowledgment}\\

Thank you to the anonymous referee for several helpful comments.  This work was partly supported by NSF grant DMS 1564281.

\section{Preliminaries and Atiyah's rotation trick}

In this section, we set up notation and recall Atiyah's rotation trick.

Let $S^1:=\{z\in \C\mid |z|=1\}$.  Let $S$ denote the $C^*$-algebra $C_0(S^1\setminus \{1\})$, so $S\cong C_0(\R)$.  For a $C^*$-algebra $A$, let $SA$ denote the suspension $S\otimes A\cong C_0(S^1\setminus \{1\},A)$.  Let $b\in K_1(S)$ denote the Bott generator, i.e.\ $b$ is the class of the unitary $z\mapsto z^{-1}$ in the unitization of $S$.  Recall moreover that for a unital $C^*$-algebra $A$, the \emph{Bott map} is defined on the class of a projection $p\in M_n(A)$ by
$$
\beta_A:K_0(A)\to K_1(SA), \quad [p] \mapsto [bp+(1-p)],
$$
where the right hand side is understood to be the class in $K_1(SA)$ of the unitary $z\mapsto z^{-1} p+1-p$ in the unitization of $C(S^1\setminus \{1\},M_n(A))\cong M_n(SA)$.  This identifies with an element in the unitary group of the $n\times n$ matrices over the unitization of $SA$.  One checks directly that the process above $\beta_A$ indeed gives a well-defined map on $K$-theory that induces a map in the non-unital case in the usual way, and moreover that the collection of maps is natural in $A$ in the sense that for any $*$-homomorphism $\phi:A\to B$, the corresponding diagram
$$
\xymatrix{ K_0(A) \ar[d]^-{\phi_*} \ar[r]^-{\beta_A} & K_1(SA) \ar[d]^-{(\text{id}_S\otimes \phi)_*}\\ K_0(B) \ar[r]^-{\beta_B} & K_1(SB) }
$$
commutes (see for example \cite[Section 11.1.1]{Rordam:2000mz}).

\begin{theorem}[Bott periodicity theorem]
For every $C^*$-algebra $A$, $\beta_A$ is an isomorphism.  
\end{theorem}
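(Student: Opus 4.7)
The plan is to construct a natural candidate inverse $\alpha_A : K_1(SA) \to K_0(A)$ using Loring's formula applied to Voiculescu's almost commuting unitaries, verify $\alpha_A \circ \beta_A = \mathrm{id}$ by reducing via naturality to a single finite-dimensional matrix computation for $A = \C$, and finally invoke Atiyah's rotation trick to upgrade this one-sided inverse into a two-sided one.

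To define $\alpha_A$, I would represent a class in $K_1(SA)$ by a unitary $u$ in a matrix algebra over the unitization of $SA$, viewed as a continuous loop of unitaries $z \mapsto u(z)$ with $u(1) = 1$. Using Voiculescu's almost commuting unitaries $U_N, V_N \in M_N(\C)$ together with continuous functional calculus, one associates to $u$ a pair of almost commuting unitaries in a matrix algebra over $\widetilde{A}$, and Loring's formula extracts from such a pair a well-defined class in $K_0(A)$. Set $\alpha_A([u])$ to be this class. Standard continuity, additivity, and homotopy invariance arguments then show that $\alpha_A$ is a group homomorphism natural in $A$.

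To verify $\alpha_A \circ \beta_A = \mathrm{id}_{K_0(A)}$, I would exploit naturality: any projection $p \in M_n(A)$ is the image of $1$ under the $*$-homomorphism $\C \to M_n(A)$, $\lambda \mapsto \lambda p$, so the problem reduces to showing $\alpha_\C(b) = [1] \in K_0(\C)$, where $b = \beta_\C([1])$ is the Bott generator $z \mapsto z^{-1}$. This is the finite-dimensional matrix computation at the technical heart of the proof: one evaluates Loring's formula on the almost commuting pair built from $U_N, V_N$ and the function $z \mapsto z^{-1}$, and checks that the resulting class in $K_0(\C)$ is $[1]$ for all sufficiently large $N$. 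This is precisely the content of Voiculescu's obstruction, recast as a K-theoretic index.

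Finally, Atiyah's rotation trick promotes this one-sided inverse to a two-sided one. The key observation is that $\beta$ is compatible with taking tensor products with a fixed auxiliary $C^*$-algebra, so one can compare $\beta_A \circ \alpha_A$ with $\alpha_A \circ \beta_A$ by viewing both as arising from a single composition on $K_*(S \otimes S \otimes A)$ and using the flip symmetry that exchanges the two copies of $S$; combined with the identity $\alpha \circ \beta = \mathrm{id}$ already established, this forces $\beta \circ \alpha = \mathrm{id}$ as well. I expect the main obstacle to lie in the explicit finite-dimensional computation for $\alpha_\C(b)$, along with carefully setting up the external-product structure and flip symmetry required by the rotation trick; everything else reduces to formal naturality.
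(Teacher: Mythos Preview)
Your proposal is correct and follows essentially the same route as the paper: define $\alpha_A$ via Loring's projection applied to a Voiculescu-type pair, reduce everything through Atiyah's rotation trick to the single finite-dimensional computation $\alpha_\C(b)=1$, and check a product/naturality compatibility for $\alpha$. Two minor points of alignment: the paper works with a continuously parametrised family $(u_t)_{t\ge 1}$ on $L^2(S^1)$ rather than the discrete $U_N,V_N$ (a technical convenience for well-definedness and homotopy invariance), and the compatibility hypothesis the rotation trick actually needs is that of $\alpha$ with external products (condition~(ii) of Proposition~\ref{atiyah rot}) --- your sketch emphasizes $\beta$'s compatibility, which is automatic, so be sure to verify it for $\alpha$.
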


An important ingredient in our approach to this will be \emph{Atiyah's rotation trick} \cite[Section 1]{Atiyah:1968ek}: this allows one to reduce the proof of Bott periodicity to constructing a homomorphism $\alpha_A:K_1(SA)\to K_0(A)$ for each $C^*$-algebra $A$, so that the collection $\{\alpha_A\mid A\text{ a $C^*$-algebra}\}$ satisfies two natural axioms.  As well as the fact that it simplifies the proof of Bott periodicity, a key virtue the rotation trick (as already pointed out in \cite[Section 7]{Atiyah:1968ek}) is that the axioms satisfied by the $\alpha_A$ are easily checked for several different natural analytic and geometric constructions of a collection of homomorphisms $\alpha_A$; this paper essentially codifies the observation that Voiculescu's almost commuting matrices give yet another way of constructing an appropriate family $\{\alpha_A\}$.  

In order to give a precise statement of the axioms, recall from \cite[Section 4.7]{Higson:2000bs} that for unital $C^*$-algebras $A$ and $B$, the formulas 
$$
\times:K_0(A)\otimes K_0(B)\to K_0(A\otimes B),\quad [p]\otimes [q] \mapsto [p\otimes q]
$$
and 
$$
\times:K_1(A)\otimes K_0(B)\to K_1(A\otimes B), \quad [u]\otimes [p]\mapsto [u\otimes p+1\otimes (1-p)]
$$
induce canonical external products on $K$-theory; moreover, applying these products to unitizations and restricting, these extend to well-defined products in the non-unital case too.  

Here then is a precise statement of the reduction that Atiyah's rotation trick allows: see the exposition in \cite[Section 4.9]{Higson:2000bs} for a proof of the result as stated here.

\begin{proposition}\label{atiyah rot}
Assume that for each $C^*$-algebra $A$ there is homomorphism $\alpha_A:K_1(SA)\to K_0(A)$ satisfying the following conditions.
\begin{enumerate}[(i)]
\item $\alpha_\C(b)=1$.
\item For any $C^*$-algebras $A$ and $B$, the diagram below
$$
\xymatrix{ K_1(SA)\otimes K_0(B) \ar[d]^-{\alpha_A\otimes 1} \ar[r]^-\times & K_1(S(A\otimes B)) \ar[d]^-{\alpha_{A\otimes B}} \\ K_0(A)\otimes K_0(B) \ar[r]^-{\times} & K_0(A\otimes B)}
$$
commutes; here the horizontal arrows are the canonical external products in $K$-theory discussed above, and we have used the canonical identifications 
$$
SA\otimes B=(S\otimes A)\otimes B=S\otimes (A\otimes B)=S(A\otimes B)
$$
to make sense of the top horizontal arrow.
\end{enumerate}
Then $\alpha_A$ and $\beta_A$ are mutually inverse for all $C^*$-algebras $A$. \qed
\end{proposition}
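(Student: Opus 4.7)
My plan is to establish the two compositions $\alpha_A\circ\beta_A = \mathrm{id}_{K_0(A)}$ and $\beta_A\circ\alpha_A = \mathrm{id}_{K_1(SA)}$ separately. The first is a formal manipulation with the axioms; the second is where the ``rotation'' in Atiyah's rotation trick actually happens.

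\textbf{Step 1 ($\alpha_A\beta_A=\mathrm{id}_{K_0(A)}$).} The key observation is that the Bott map is itself an external product. For $[p]\in K_0(A)$, the class $\beta_A([p])=[bp+(1-p)]$ coincides with the image of $b\otimes [p]$ under the external product $K_1(S)\otimes K_0(A)\to K_1(SA)$ recalled in the excerpt. Applying axiom~(ii) to this element, with the ``$A$'' of the axiom taken to be $\C$ and the ``$B$'' taken to be our $A$, gives
\[
\alpha_A(\beta_A[p]) \;=\; \alpha_A(b\times[p]) \;=\; \alpha_\C(b)\times[p] \;=\; 1\times[p] \;=\; [p],
\]
using axiom~(i) in the penultimate step.

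\textbf{Step 2 ($\beta_A\alpha_A=\mathrm{id}_{K_1(SA)}$).} For $[u]\in K_1(SA)$ I need to show $b\times\alpha_A([u])=[u]$. The idea is to pass to the double suspension $S\otimes SA$ and exploit the symmetry of its two $S$-factors: the flip isomorphism $\sigma\colon S\otimes S\to S\otimes S$ acts on $K$-theory through graded commutativity of external products, so that a product of two $K_1$-classes in $K_0$ of a double suspension picks up a sign of $-1$ under $\sigma$. Concretely, the plan is to form the class $b\times[u]$ in $K_*$ of a double suspension and compute $\alpha$ applied to it in two ways using axiom~(ii): once applied to the ``$[u]$-factor'' with auxiliary algebra built from the extra $S$, and once applied to the ``$b$-factor'' after invoking the flip, using axiom~(i) to collapse $\alpha_\C(b)=1$. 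Balancing the two expressions, and using Step~1 already proven for the algebra $SA$ to identify one of them, forces $\beta_A\alpha_A[u]=[u]$.

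\textbf{Where the difficulty lies.} Step 1 is essentially unpacking definitions. The real work is in Step 2: one has to choose the correct double-suspended class on which to apply $\alpha$, set up the two applications of axiom~(ii) symmetrically, and keep track of the sign contributed by the flip of the two $S$-factors (which is itself a consequence of graded commutativity, ultimately traceable to the $K_1\otimes K_1\to K_0$ product being antisymmetric). Naturality of the maps $\alpha_A$ in $A$---implicit in them being well-defined $K$-theory maps---is also needed to transport classes under the flip. This is precisely the ``rotation'' content of Atiyah's argument, and is the reason why the single asymmetric multiplicativity axiom~(ii) is in fact enough to force $\beta_A$ to be invertible.
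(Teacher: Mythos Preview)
The paper does not give a proof of this proposition: it ends with a bare \qed and refers the reader to \cite[Section 4.9]{Higson:2000bs}.  So there is no in-paper argument to compare against, and I will simply comment on your proposal.

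Your Step~1 is correct and is exactly the standard calculation.

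Your Step~2 has the right shape but, as written, leans on ingredients that are not available from the hypotheses.  Two specific points.  First, you form a class $b\times[u]$ with both factors in $K_1$ and then invoke ``graded commutativity'' to say the flip on the two $S$-factors contributes a sign $-1$.  The paper only sets up the products $K_0\times K_0\to K_0$ and $K_1\times K_0\to K_1$, so a $K_1\times K_1$ product and its antisymmetry would have to be built and proved from scratch; doing so without circularity (i.e.\ without using Bott periodicity) is exactly the content of the rotation trick, not a lemma you can quote.  Concretely, the honest input is the elementary homotopy fact that rotation of $\R^2$ by $\pi/2$ is homotopic to the identity, which identifies the flip of $S\otimes S$ with (flip)$\circ(\nu\otimes 1)$ where $\nu$ is the reflection automorphism of $S$; one then checks directly that $\nu_*(b)=-b$.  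That is where the sign actually comes from.

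Second, you appeal to ``naturality of the maps $\alpha_A$ in $A$---implicit in them being well-defined $K$-theory maps''.  Naturality of $\alpha$ under $*$-homomorphisms is \emph{not} part of the hypotheses (only axioms (i) and (ii) are), and it is not automatic.  The standard argument in the reference avoids this entirely: one observes that $\beta$ itself satisfies the analogue of axiom~(ii) (immediate from its definition as external product with $b$), and then plays the two multiplicativity statements against the rotation homotopy above; no naturality of $\alpha$ is needed.  Your outline can be repaired along these lines, but as it stands the two appeals flagged above are gaps.
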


\section{Almost commuting matrices}

Our goal in this section is to construct homomorphisms $\alpha_A:K_1(SA)\to K_0(A)$ with the properties in Proposition \ref{atiyah rot} above, and thus prove Bott periodicity.   To motivate our construction, we start by recalling Voiculescu's almost commuting matrices.

Let $\{\delta_0,...,\delta_{n-1}\}$ be the canonical orthonormal basis for $\C^n$, and define unitaries by 
\begin{equation}\label{voic mat}
u_n:\delta_k\mapsto e^{2\pi i k/n} \delta_k \quad \text{and}\quad v_n:\delta_k\mapsto \delta_{k+1},
\end{equation}
where the `$k+1$' in the subscript above should be interpreted modulo $n$, or in other words $v_n:\delta_{n-1}\mapsto \delta_0$.  It is straightforward to check that $\|[u_n,v_n]\|\to 0$ as $n\to\infty$ (the norm here and throughout is the operator norm).  Voiculescu \cite{Voiculescu:1983km} proved the following result.

\begin{theorem}\label{voic the}
There exists $\epsilon>0$ such that if $u_n',v_n'$ are $n\times n$ unitary matrices that actually commute, then we must have $\max\{\|u_n-u_n'\|,\|v_n-v_n'\|\}\geq \epsilon$ for all $n$. \qed
\end{theorem}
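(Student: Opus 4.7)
The plan is to produce a $\Z$-valued invariant $\omega(u,v)$, defined on the open set of pairs of unitary matrices with $\|[u,v]\|$ below some fixed threshold $\delta$, that is locally constant on this set, vanishes whenever $u$ and $v$ commute, and is nonzero on Voiculescu's pair $(u_n,v_n)$. Granted such a gadget, the theorem falls out by contradiction: if for every $\epsilon>0$ some $(u_n,v_n)$ admitted exactly commuting approximants $(u_n',v_n')$ within norm $\epsilon$, then for $\epsilon$ sufficiently small the straight-line path from $(u_n,v_n)$ to $(u_n',v_n')$ would remain inside the domain where $\omega$ is locally constant, forcing $\omega(u_n,v_n)=\omega(u_n',v_n')=0$ and contradicting the nonvanishing at $(u_n,v_n)$.

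For the construction I would follow Loring and choose real continuous functions $f,g,h$ on $S^1$ with $g$ and $h$ having disjoint supports and satisfying $g^2+h^2=f(1-f)$, and form the self-adjoint element
$$
\epsilon(u,v)=\begin{pmatrix} f(u) & g(u)+h(u)v \\ g(u)+v^*h(u) & 1-f(u)\end{pmatrix}\in M_2(B(\h)).
$$
A direct expansion shows that $\epsilon(u,v)$ is an honest projection when $uv=vu$, and in general $\|\epsilon(u,v)^2-\epsilon(u,v)\|=O(\|[u,v]\|)$ with constants controlled only by the Lipschitz data of $f,g,h$. Thus for $\|[u,v]\|<\delta$ small enough the spectrum of $\epsilon(u,v)$ avoids $1/2$, and the spectral projection $\chi_{[1/2,\infty)}(\epsilon(u,v))$ is well defined. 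I then set $\omega(u,v)$ equal to the rank of this spectral projection minus the rank of the reference projection $\mathrm{diag}(1,0)$; this is an integer-valued continuous function of $(u,v)$ on its domain, hence locally constant.

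The vanishing on commuting pairs is a spectral-theorem calculation: if $u',v'$ commute on $\C^n$ we simultaneously diagonalise them, note that at each joint eigenvalue $(z,w)\in\T^2$ the $2\times 2$ matrix $\epsilon(z,w)$ has trace $1$ and determinant $0$ and so is a rank-one projection, and conclude $\mathrm{rank}(\epsilon(u',v'))=n=\mathrm{rank}(\mathrm{diag}(1,0))$, so $\omega(u',v')=0$. For Voiculescu's clock-and-shift pair (\ref{voic mat}) one has to perform an explicit finite-dimensional rank computation, using the commutation relation $v_nu_n=e^{2\pi i/n}u_nv_n$ to diagonalise or block-decompose $\epsilon(u_n,v_n)$, and check that the resulting spectral projection has rank differing from $n$ by a nonzero integer (independent of $n$ for the natural choice of $f,g,h$). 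The main obstacle I anticipate is precisely this concrete rank calculation: one has to design $f,g,h$ carefully enough that both the almost-projection estimate is uniform and the Voiculescu index can be read off in closed form. Everything else -- the continuity argument, the spectral-theorem vanishing, and the final $\epsilon$-extraction -- is then essentially formal.
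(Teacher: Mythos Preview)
Your proposal is correct and is precisely the Loring argument that the paper sketches in the paragraph immediately following the theorem (the paper does not give an independent proof; it cites Voiculescu and then outlines Loring's reproof via the element $e(u,v)$). The rank computation you rightly identify as the crux is carried out in detail later in the paper as Proposition~\ref{key computation}, where it is shown that $\mathrm{tr}\,\chi(e_N)-N\to 1$; this is exactly the ``nonzero integer'' you need.

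One small technical slip: the straight-line path from $(u_n,v_n)$ to $(u_n',v_n')$ does not stay in the set of unitary pairs, so $\omega$ is not defined along it. You do not need a path at all, though: since $(u,v)\mapsto \epsilon(u,v)$ is norm-continuous in its matrix arguments and the rank of $\chi_{[1/2,\infty)}(\epsilon)$ is locally constant on the set where $1/2\notin\mathrm{spec}(\epsilon)$, closeness of $(u_n',v_n')$ to $(u_n,v_n)$ already forces $\omega(u_n,v_n)=\omega(u_n',v_n')$ directly.
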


In words, the sequence of pairs $(u_n,v_n)$ cannot be well-approximated by pairs that actually commute.  Voiculescu's original proof of this fact uses non-quasidiagonality of the unilateral shift; there is a close connection of this to $K$-theory and Bott periodicity, but this is not obvious from the original proof.  A more directly $K$-theoretic argument is due to Loring from his thesis \cite{Loring:1985ud}.  Fix smooth functions $f,g,h:S^1\to [0,1]$ with the properties in \cite[pages 10-11]{Loring:1985ud}: the most salient of these are that 
$$
f^2+g^2+h^2=f, \quad f(1)=1,\quad g(1)=h(1)=0, \quad \text{and}\quad gh=0.  
$$
For a pair of unitaries $u,v$ in a unital $C^*$-algebra $B$, define the \emph{Loring element} 
\begin{equation}\label{loring elt}
e(u,v):=\begin{pmatrix} f(u) & g(u)+ h(u)v \\ v^*h(u)+g(u) & 1-f(u)\end{pmatrix},
\end{equation}
which in general is a self-adjoint element of $M_2(B)$.  For later use, note that the conditions on $f,g,h$ above imply the following formula, where for brevity we write $e=e(u,v)$, $f=f(u)$, $g=g(u)$, $h=h(u)$,
\begin{equation}\label{e2 form}
e^2=e+\begin{pmatrix} hvg+gv^*h & [f,hv] \\ [v^*h,f] & v^*h^2v-h^2 \end{pmatrix};
\end{equation}
in particular, if $u$ and $v$ happen to commute, this shows that $e$ is an idempotent.  Similarly, one can show that if $\|uv-vu\|$ is suitably small, then $\|e^2-e\|$ is also small, so in particular the spectrum of $e$ misses $1/2$: indeed, this is done qualitatively in \cite[Proposition 3.5]{Loring:1985ud}, while a quantitative result for a specific choice of $f$, $g$, and $h$ can be found in \cite[Theorem 3.5]{Loring:2014xw}; the latter could be used to make the conditions on $t$ that are implicit in our results more explicit.  Thus if $\chi$ is the characteristic function of $[1/2,\infty)$, then $\chi$ is continuous on the spectrum of $e$, and so $\chi(e)$ is a well-defined projection in $B$.  Loring shows that if $e_n\in M_{2n}(\C)$ is the Loring element associated to the matrices $u_n,v_n\in M_n(\C)$ as in line \eqref{voic mat}, then for all suitably large $n$, 
$$
\text{rank}(e_n)-n=1.
$$
However, it is not difficult to see that if $u_n$ and $v_n$ were well-approximated by pairs of actually commuting matrices in the sense that the conclusion of Theorem \ref{voic the} fails, then for all suitably large $n$ one would have that $\text{rank}(e_n)=0$.  Thus Loring's work in particular reproves Theorem \ref{voic the}.

Now, let us get back to constructing a family of homomorphisms $\alpha_A:K_1(SA)\to K_0(A)$ satisfying the conditions of Proposition \ref{atiyah rot}, and thus to prove Bott periodicity.  For this, it will be convenient to have a continuously parametrised versions of the sequence $(u_n)$, which we now build.

Let then $\delta_n:S^1\to \C$ be defined by $\delta_n(x)=\frac{1}{\sqrt{2\pi}}z^n$, so the collection $\{\delta_n\}_{n\in \Z}$ is the canonical `Fourier orthonormal basis' for $L^2(S^1)\cong \ell^2(\Z)$.  For each $t\in [1,\infty)$, define a unitary operator $u_t:L^2(S^1)\to L^2(S^1)$ to be diagonal with respect to this basis, and given by
$$
u_t\delta_n:=\left\{\begin{array}{ll} e^{2\pi i nt^{-1}}\delta_n & 0\leq t^{-1}n \leq 1 \\  \delta_n & \text{otherwise} \end{array}\right..
$$
Thus $u_t$ agrees with the operator of rotation by $2\pi t^{-1}$ radians on $\text{span} \{\delta_0,...,\delta_n\mid n\leq t\}$, and with the identity elsewhere.  Let $A$ be a unital $C^*$-algebra faithfully represented on some Hilbert space $H$, and represent $C(S^1)$ on $L^2(S^1)$ by multiplication operators in the canonical way.  Represent $SA=\{f\in C(S^1, A)\mid f(1)=0\}$ faithfully on $L^2(S^1,H)$ as multiplication operators.  Let $\chi$ be the characteristic function of $\{z\in \C\mid \text{Re}(z)\geq \frac{1}{2}\}$.

Note that the Bott element $b$ acts on $L^2(S^1)$ via the (backwards) bilateral shift.  When compressed to the subspace $\text{span}\{\delta_0,...,\delta_n\mid n\leq t\}$ of $L^2(S^1)$, the operators $u_t$ and $b$ are thus slight variants of Voiculescu's almost commuting unitaries from line \eqref{voic mat}.

\begin{lemma}\label{in right place}
Let $A$ be a unital $C^*$-algebra, let $\widetilde{SA}$ be the unitization of its suspension, and let $v\in \widetilde{SA}$ be a unitary operator.  Then with notation as in the above discussion:
\begin{enumerate}[(i)]
\item the spectrum of the Loring element $e(u_t\otimes 1_A,v)$ (cf.\ line \eqref{loring elt}) does not contain $1/2$ for all large $t$;
\item the difference 
$$
\chi(e(u_t\otimes 1_A,v))-\begin{pmatrix} 1 & 0 \\ 0 & 0 \end{pmatrix}
$$
is in $\mathcal{K}(L^2(S^1))\otimes M_{2}(A)$ for all $t$;
\item the function 
$$
t\mapsto \chi(e(u_t\otimes 1_A,v))
$$
is operator norm continuous for all large $t$.
\end{enumerate}
\end{lemma}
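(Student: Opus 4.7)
The plan is to handle the three parts in order: (i) via a commutator estimate derived from the formula \eqref{e2 form}, (ii) via the structure of a unitary in $\widetilde{SA}$ combined with ideal-preservation of continuous functional calculus, and (iii) via norm-continuity of $t\mapsto u_t$ together with continuity of continuous functional calculus. For (i) I would begin by noting that $f(u_t), g(u_t), h(u_t)$ commute pairwise (as functions of a single operator) and that $gh=0$ forces $hg=0$; rewriting the right-hand side of \eqref{e2 form} using these relations expresses each entry of $e^2-e$ as a left or right multiple of a commutator of $v$ or $v^*$ with $f(u_t\otimes 1_A)$ or $h(u_t\otimes 1_A)$. For instance the $(1,1)$ entry becomes $h[v,g]+g[v^*,h]$, the $(1,2)$ entry $[f,hv]$ simplifies to $h[f,v]$, and the $(2,2)$ entry $v^*h^2v-h^2$ is $v^*[h^2,v]$. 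It therefore suffices to prove $\|[\varphi(u_t\otimes 1_A),v]\|\to 0$ as $t\to\infty$ for each $\varphi\in\{f,g,h\}$.

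The key estimate uses that $\varphi(u_t)$ is a Fourier multiplier on $L^2(S^1)$ with symbol $\sigma_\varphi(n)=\tilde\varphi(n/t)$ for $0\le n\le t$ (writing $\tilde\varphi(s):=\varphi(e^{2\pi i s})$) and $\sigma_\varphi(n)=\varphi(1)$ otherwise. The boundary conditions $f(1)=1$ and $g(1)=h(1)=0$ make this symbol continuous in $n$, and smoothness of $\varphi$ on $S^1$ gives a Lipschitz bound of order $1/t$. Consequently, for the shift $M_{z^k}$ (sending $\delta_n\mapsto \delta_{n+k}$) one obtains $\|[\varphi(u_t),M_{z^k}]\|=\sup_n|\sigma_\varphi(n+k)-\sigma_\varphi(n)|=O(|k|/t)$. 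Approximating a continuous $v\in C(S^1,A)$ in sup norm by a Laurent polynomial $v_N$ with $\|v-v_N\|_\infty<\epsilon$ then yields $\|[\varphi(u_t\otimes 1_A),v]\|\le C(v_N)/t+2\|\varphi\|_\infty\epsilon$; letting $t\to\infty$ and then $\epsilon\to 0$ proves the commutator vanishes. Feeding this into the rewritten $e^2-e$ gives $\|e^2-e\|\to 0$, so by self-adjointness $\sigma(e)$ avoids $1/2$ for large $t$, proving (i).

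For (ii) I would write a unitary $v\in\widetilde{SA}$ as $\lambda\cdot 1+w$ with $|\lambda|=1$ and $w\in SA=C_0(S^1\setminus\{1\},A)$. The operators $f(u_t)-1$, $g(u_t)$, and $h(u_t)$ have symbols supported on the finite set $\{0,1,\ldots,\lfloor t\rfloor\}$ (using the boundary conditions on $f,g,h$) and are therefore finite-rank on $L^2(S^1)$. Each entry of $e(u_t\otimes 1_A,v)-q$, where $q=\begin{pmatrix}1 & 0\\ 0 & 0\end{pmatrix}$, is then a sum of a finite-rank operator tensored with an element of $A$ and a product of the form $h(u_t)M_w$ (or its adjoint); approximating $w$ by simple tensors in $C_0(S^1\setminus\{1\})\otimes A$ and using compactness of $h(u_t)$ places $h(u_t)M_w\in\mathcal{K}(L^2(S^1))\otimes A$. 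Therefore $e(u_t\otimes 1_A,v)-q\in \mathcal{K}(L^2(S^1))\otimes M_2(A)$. Since $q$ is a projection with $\chi(q)=q$ and $\chi$ is continuous on $\sigma(e)\cup\sigma(q)$ by (i), the standard ideal-respecting property of continuous functional calculus (if $x-y\in J$ and $\varphi$ is continuous on $\sigma(x)\cup\sigma(y)$, then $\varphi(x)-\varphi(y)\in J$) finishes (ii).

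For (iii), a direct eigenvalue comparison shows $\|u_t-u_s\|\le 2\pi|t-s|/\max(t,s)$, attention being paid to the transition region $(\min(t,s),\max(t,s)]$ where the eigenvalue of the larger operator is close to $1$; this makes $t\mapsto u_t$ norm-continuous, so by continuous functional calculus $t\mapsto e(u_t\otimes 1_A,v)$ is norm-continuous as well. Since by (i) the spectrum of $e(u_t,v)$ stays uniformly bounded away from $1/2$ on a neighborhood of each large $t_0$, one may replace $\chi$ on this neighborhood by a genuinely continuous function agreeing with it there and conclude continuity of $\chi(e(u_t\otimes 1_A,v))$ from continuity of continuous functional calculus. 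The principal technical obstacle is the commutator estimate in (i): establishing the Fourier-multiplier Lipschitz bound carefully across the transition $n\approx t$, and then combining it with the polynomial approximation of $v$, is the heart of the argument; the rest is routine.
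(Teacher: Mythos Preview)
Your argument is correct and follows the same overall strategy as the paper: a commutator estimate feeding into \eqref{e2 form} for (i), showing the Loring element is a compact-tensor-$A$ perturbation of $q$ for (ii), and norm continuity in $t$ plus stability of functional calculus for (iii).  The tactical details differ slightly.  For (i) the paper first shows $\|[u_t,b]\|\to 0$ for the single generator $b$ of $C(S^1)$ and then bootstraps to $[k(u_t)\otimes 1,v]\to 0$ for all $k\in C(S^1)$ and $v\in C(S^1,A)$; your direct Fourier-multiplier Lipschitz bound on $\varphi(u_t)$ is an equally valid (and more quantitative) route to the same conclusion.  For (ii) the paper instead reduces to a Laurent-polynomial $v$ and observes that $e(u_t\otimes 1_A,v)$ is then genuinely block-diagonal with respect to a decomposition $(\operatorname{span}\{\delta_{-N},\dots,\delta_N\}\otimes H)^{\oplus 2}\oplus(\cdot)^\perp$, agreeing with $q$ on the complement, so that $\chi(e)-q$ is visibly finite-rank in the $L^2(S^1)$ factor; your approach via $e-q\in\mathcal{K}(L^2(S^1))\otimes M_2(A)$ together with the ideal-preservation property of continuous functional calculus is cleaner but, as you note, consumes (i) to justify continuity of $\chi$ on $\sigma(e)\cup\sigma(q)$ and hence literally yields (ii) only for large $t$---this is harmless for the application, and in fact the paper's reduction step (``we may assume $v$ is a Laurent polynomial'') tacitly uses the same spectral gap.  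For (iii) your explicit estimate $\|u_t-u_s\|\le 2\pi|t-s|/\max(t,s)$ is a sharper version of the paper's qualitative remark that only finitely many eigenvectors move over compact $t$-intervals.
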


\begin{proof}
For part (i), we first claim that for any $f\in C(S^1)$, $[v_t,f]\to 0$ as $t\to\infty$.  Indeed, it suffices to check this for the Bott element $b(x)=z^{-1}$ as this function generates $C(S^1)$ as a $C^*$-algebra.  With respect to the orthonormal basis $\{\delta_n:n\in \Z\}$ we have that $b$ acts by 
$$
b:\delta_n\mapsto \delta_{n-1},
$$
i.e.\ $b$ is the inverse of the usual bilateral shift.  On the other hand, we have that 
$$
\|[v_t,b]\|=\|(v_t-bv_tb^*)b\|=\|v_t-bv_tb^*\|,
$$
and one computes directly that $v_t-bv_tb^*$ is a multiplication operator by an element of $\ell^\infty(\Z)$ that tends to zero as $t$ tends to infinity, completing the proof of the claim.  

It follows from the claim that $[v_t\otimes 1,f\otimes a]\to 0$ as $t\to\infty$ for any $f\in C(S^1)$, and any $a\in A$.  Hence $[k(v_t)\otimes 1,a]\to 0$ for any $a\in C(S^1,A)$ and any $k\in C(S^1)$ (as $k(v_t)$ is in the $C^*$-algebra generated by $v_t$).  Part (i) follows from this and the formula in line \eqref{e2 form}, plus the fact that $hg=gh=0$.

For part (ii), consider
$$
e(u_t\otimes 1_A,v)=\begin{pmatrix} f(u_t)\otimes1_A & g(u_t)\otimes1_A+(h(u_t)\otimes 1_A)v \\ g(u_t)\otimes1_A+v^*(h(u_t)\otimes 1_A) & (1-f(u_t))\otimes1_A \end{pmatrix}.
$$
As $e$ is norm continuous in the `input unitaries', we may assume that $v\in C(S^1,A)$ is of the form $z\mapsto \sum_{n=-M}^M z^n a_n$ for some finite $M\in \N$.  It follows from this, the formula for $v_t$, and the facts that $h(1)=0=g(1)$ and $f(1)=1$, that there exists $N\in \N$ (depending on $M$ and $t$) such that the operator $e(u_t\otimes 1_A,v)$ leaves some subspace of $(L^2(S^1)\otimes H)^{\oplus 2}$ of the form 
$$
(\text{span}\{\delta_{-N},...,\delta_N\}\otimes H)^{\oplus 2}
$$
invariant, and moreover that it agrees with the operator $\begin{pmatrix} 1 & 0 \\ 0 & 0 \end{pmatrix}$ on the orthogonal complement of this subspace.  It follows from this that 
$$
\chi(e(u_t\otimes 1_A,v))-\begin{pmatrix} 1 & 0 \\ 0 & 0 \end{pmatrix}
$$
is also zero on the orthogonal complement of 
$$
(\text{span}\{\delta_{-N},...,\delta_N\}\otimes H)^{\oplus 2}
$$
and we are done.

For part (iii), it is straightforward to check that the functions $t\mapsto h(u_t)$, $t\mapsto f(u_t)$ and $t\mapsto g(u_t)$ are continuous, as over a compact interval in the $t$ variable, they only involve continuous changes on the span of finitely many of the eigenvectors $\{\delta_n\}_{n\in \Z}$.  It follows from this and the formula for $e(u,v)$ that the function 
$$
t\mapsto e(u_t\otimes 1_A,v)
$$
is norm continuous.  The claim follows from this, the fact that $\chi$ is continuous on the spectrum $e(u_t\otimes 1_A,v)$ for large $t$, and continuity of the functional calculus in the appropriate sense (see for example \cite[Lemma 1.2.5]{Rordam:2000mz}).
\end{proof}

\begin{corollary}\label{alpha map}
With notation as above, provisionally define 
$$
\alpha_A:K_1(SA)\to K_0(A\otimes \mathcal{K}) 
$$
by the formula for $u\in M_n(\widetilde{A})$
$$
[u]\mapsto [\chi(e(v_t\otimes 1_{M_n(A)},u))]-\begin{bmatrix} 1 & 0 \\ 0 & 0 \end{bmatrix},
$$
where $t$ is chosen sufficiently large (depending on $u$) so that all the conditions in Lemma \ref{in right place} hold.  Then this is a well-defined homomorphism for any $C^*$-algebra $A$.
\end{corollary}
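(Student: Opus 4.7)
The strategy is to verify four items in turn: that the formula yields an element of $K_0(A\otimes \mathcal{K})$ at all; that it is independent of the choice of (large) $t$; that it depends only on the $K_1$-class of $u$; and that the resulting map is additive. Each will be checked using the content of Lemma \ref{in right place} together with a block-sum decomposition of the Loring element.

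For the first point, Lemma \ref{in right place}(i) says that for $t$ above some threshold $t_0=t_0(u)$, the characteristic function $\chi$ is continuous on the spectrum of $e(u_t\otimes 1,u)$, so that $\chi(e(u_t\otimes 1,u))$ is a genuine projection; and Lemma \ref{in right place}(ii) places its difference from $\begin{pmatrix} 1 & 0 \\ 0 & 0 \end{pmatrix}$ inside $\mathcal{K}(L^2(S^1))\otimes M_{2n}(A)\subset A\otimes \mathcal{K}$. This is the standard setup for producing a $K_0$-class of an ideal as a formal difference of two projections. Independence of $t\geq t_0$ follows immediately from Lemma \ref{in right place}(iii): the path $t\mapsto \chi(e(u_t\otimes 1,u))$ is a norm continuous path of projections, hence locally (and so globally) constant in $K_0$.

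For independence of the unitary representative of $[u]\in K_1(SA)$, I would handle homotopy and stabilization separately. Given a norm continuous path $s\mapsto u_s$ of unitaries in $M_n(\widetilde{SA})$, the Loring element depends norm continuously on $s$, and since $s$ ranges over a compact interval one can pick a single $t$ large enough that all the spectra uniformly miss $1/2$; applying $\chi$ then yields a norm continuous path of projections connecting the two endpoints. For the stabilization $u\mapsto u\oplus 1_{\widetilde{SA}}$, the key observation is that (after a permutation of matrix indices) $e(u_t\otimes 1_{M_{n+1}(A)},u\oplus 1)$ splits as $e(u_t\otimes 1_{M_n(A)},u)\oplus e(u_t,1)$. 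Because $u_t$ commutes with the scalar $1$, the second summand is already a projection, and a small trace computation on the finite-dimensional invariant subspace supplied by Lemma \ref{in right place}(ii) (using $f(1)=1$ and $g(1)=h(1)=0$) shows it has the same $K_0$-class as $\begin{pmatrix} 1 & 0 \\ 0 & 0 \end{pmatrix}$, so the stabilization contributes nothing.

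Additivity falls out of the analogous block-sum decomposition applied to $[u_1]+[u_2]=[u_1\oplus u_2]$: the Loring element, the functional calculus with $\chi$, and the base projection all distribute over the direct sum, so the subtraction splits between the two blocks. The step I expect to require the most care is the stabilization identity, which pinpoints exactly why the construction respects the $K_1$-relations; once it is in hand, homotopy invariance and additivity drop out of the continuity and block-sum properties of the Loring construction respectively. It is also worth keeping track of the fact that while the threshold $t_0$ in Lemma \ref{in right place} depends on the unitary, it can be chosen uniformly along any compact path, which is what makes the homotopy argument go through.
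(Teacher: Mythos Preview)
Your proposal is correct and follows the same route as the paper. The paper's proof is a single sentence deferring to the universal property of $K_1$ (R{\o}rdam--Larsen--Laustsen, Proposition~8.1.5): one must check that the assignment on unitaries is homotopy invariant, vanishes on the identity, and is additive on block sums. Your four items---landing in $K_0(A\otimes\mathcal{K})$, independence of $t$, independence of the representative via homotopy and stabilization, and additivity---are precisely this verification written out in full, with your ``trace computation'' for $e(u_t,1)$ being the $\nu(1)=0$ step (indeed $\operatorname{tr}\,e(u_t,1)=\operatorname{tr}(f(u_t)+(1-f(u_t)))$ matches the trace of $\begin{pmatrix}1&0\\0&0\end{pmatrix}$ on the relevant finite-dimensional block).
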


\begin{proof}
This is straightforward to check from the formulas involved together with the universal property of $K_1$ as exposited in \cite[Proposition 8.1.5]{Rordam:2000mz}, for example.
\end{proof}

Abusing notation slightly, we identity $K_0(A\otimes \mathcal{K})$ with $K_0(A)$ via the canonical stabilization isomorphism, and thus treat $\alpha_A$ as a homomorphism from $K_1(SA)$ to $K_0(A)$.  To complete our proof of Bott periodicity, it remains to check that these homomorphisms $\alpha_A$ have the properties from Proposition \ref{atiyah rot}.  The second of these properties is almost immediate; we leave it to the reader to check.  

The first property, that $\alpha_\C(b)=1$, is more substantial, and we give the proof here following computations in Loring's thesis. 

\begin{proposition}\label{key computation}
With notation as above, $\alpha_\C(b)=1$.
\end{proposition}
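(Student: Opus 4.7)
The plan is to compute $\alpha_\C(b)$ by choosing concrete representatives and reducing to the finite-dimensional rank computation that Loring performed on his element applied to Voiculescu's almost commuting matrices, as outlined in the introduction.

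I would represent the Bott class $b \in K_1(S)$ by the unitary $v(z)=z^{-1}$ in $\widetilde{S}\subset C(S^1)$; under the multiplication representation on $L^2(S^1)$ this is precisely the backwards bilateral shift $\delta_n\mapsto \delta_{n-1}$, and I would take the parameter $t$ to be a large positive integer $n$, working throughout with $e(u_n,v)\in \mathcal{B}(L^2(S^1)\oplus L^2(S^1))$.

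The first main step is to locate a finite-dimensional invariant subspace. Since $u_n$ is the identity outside $\mathrm{span}\{\delta_0,\dots,\delta_n\}$, and the boundary conditions $f(1)=1$, $g(1)=h(1)=0$ kill the would-be "escape terms" (e.g.\ $h(u_n)v\delta_0 = h(1)\delta_{-1}=0$, and similarly at the top end of the range), one checks entry by entry that $V:=\mathrm{span}\{\delta_0,\dots,\delta_n\}^{\oplus 2}$ is invariant under $e(u_n,v)$, and that on $V^\perp$ the operator equals $\mathrm{diag}(1,0)$ exactly. Consequently $\chi(e(u_n,v))-\mathrm{diag}(1,0)$ is a finite-rank operator supported in $V$, and $\alpha_\C(b)\in K_0(\C)\cong \Z$ is nothing but the integer $\mathrm{rank}(\chi(e(u_n,v))|_V)-(n+1)$.

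The second main step is to identify the restriction $e(u_n,v)|_V$ with (a mild variant of) Loring's element applied to Voiculescu's pair from \eqref{voic mat}. On $V$ the operator $u_n$ is diagonal with the prescribed roots of unity, and the compression of $v$ plays the role of Voiculescu's $v_n$; invoking Loring's computation $\mathrm{rank}(e_n)-n=1$ recalled in the introduction then gives $\alpha_\C(b)=1$.

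The main obstacle is the second step, because the ambient subspace $V$ has dimension $n+1$ rather than $n$, and the compression of the bilateral shift $v$ is not quite Voiculescu's cyclic shift. I expect one must either cite Loring's thesis after a careful dimensional reconciliation showing these boundary effects do not alter the rank count, or else redo the rank computation directly in the present setting using the explicit block-tridiagonal structure of $e(u_n,v)|_V$ — this is elementary linear algebra, but it is the unavoidable finite-dimensional computation that the paper advertises as the heart of the proof of Bott periodicity.
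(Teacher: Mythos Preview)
Your proposal is correct and follows essentially the same route as the paper. Two refinements are worth noting. First, the paper takes the invariant subspace to be $H_N=\mathrm{span}\{\delta_1,\dots,\delta_N\}$ rather than $\{\delta_0,\dots,\delta_n\}$; with this choice the restriction of $e(u_N,b)$ is \emph{exactly} $e(u_N|_{H_N},b_N)$ with $b_N$ the cyclic shift on $N$ letters, so the ``obstacle'' you flag (dimension $n+1$, non-cyclic compression) simply disappears. Second, rather than citing Loring's rank result, the paper carries out the trace computation explicitly in the spirit of Loring: it shows $\|\chi(e_N)-e_N\|=O(1/N)$, replaces $\chi$ by the polynomial $p(x)=3x^2-2x^3$ (which matches $\chi$ to second order at $0$ and $1$), and then computes $\mathrm{tr}(e_N^2)=N$ exactly and $\mathrm{tr}(e_N^3)\to N-\tfrac12$ via a Riemann-sum argument, giving $\mathrm{tr}(\chi(e_N))\to N+1$.
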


\begin{proof}
We must compute the element
$$
[\chi(e(u_t,b))]-\begin{bmatrix} 1 & 0 \\ 0  & 0 \end{bmatrix} \in K_0(\mathcal{K})\cong \Z
$$
for suitably large $t$, and show that it is one.  We will work with an integer value $t=N$ for $N$ suitably large.  Note that with respect to the canonical basis $\{\delta_n\}_{n\in \Z}$ of $L^2(S^1)$, the element $b$ acts as the (inverse of the) unilateral shift.  On the other hand, on this basis $u_N(\delta_n)=\delta_n$ for all $n\not\in (0,N)$.  Define 
$$
H_N:=\text{span}\{\delta_n\mid 1\leq n \leq N\},
$$
It follows from the above observations, the fact that $f(1)=1$ and $h(1)=g(1)=0$, and a direct computation, that $H_N^{\oplus 2}$ is an invariant subspace of $L^2(S^1)^{\oplus 2}$ for both $e(u_N,b)$, and for $\begin{pmatrix} 1 & 0 \\ 0 & 0 \end{pmatrix}$.  Moreover, these two operators agree on the orthogonal complement $(H_N^{\oplus 2})^\perp$.  On $H_N^{\oplus 2}$, $e(u_N,b)$ agrees with the operator $e(u_N,b_N)$, where we abuse notation by writing $u_N$ also for the restriction of $u_N$ to $H_N$, and where $b_N:H_N\to H_N$ is the permutation operator defined by 
$$
b_N:\delta_n\mapsto \delta_{n+1~\text{mod $n$}}
$$
(i.e.\ $b_N$ is the cyclic shift of the canonical basis).  From these computations, we have that if we identify $K_0(\mathcal{K}(L^2(S^1)))\cong \Z$ and $K_0(\mathcal{B}(H_N))\cong \Z$ via the canonical inclusion $\mathcal{B}(H_N)\to \mathcal{K}(L^2(S^1))$ (which induces an isomorphism on $K$-theory) then 
$$
[\chi(e(u_N,b))]-\begin{bmatrix} 1 & 0 \\ 0  & 0 \end{bmatrix}= [\chi(e(u_N,b_N))]-\begin{bmatrix} 1 & 0 \\ 0  & 0 \end{bmatrix}\in \Z.
$$
Thus we have reduced the proposition (and therefore the proof of Bott periodicity) to a finite-dimensional matrix computation for $N$ large: we must show that if $e_N:=\chi(e(u_N,b_N))$, then the trace of the $2N\times 2N$ matrix
$$
\chi(e_N)-\begin{pmatrix} 1 & 0 \\ 0 & 0 \end{pmatrix}
$$
is $1$ for all large $N$, or equivalently, that the trace of the $2N\times 2N$ matrix $e_N$ is $N-1$ for all large $N$.

This can be computing directly, following Loring.  The computation is elementary, although slightly involved; we proceed as follows.\\

\textbf{Step 1:} $\|\chi(e_N)-e_N\|=O(1/N)$ for all $N$.

For notational convenience, we fix $N$, drop the subscript $_N$, and write $f$ for $f(v_N)=f(v)$ and similarly for $g$ and $h$.  We have that $|\chi(x)-x|\leq 2|x^2-x|$ for all $x\in \R$, whence from the functional calculus $\|\chi(e)-e\|\leq 2\|e-e^2\|$.   Using the formula in line \eqref{e2 form} above, it will thus suffice to show that the norm of 
$$
\begin{pmatrix} hbg+gb^*h & [f,bv] \\ [b^*h,f] & b^*h^2b-h^2 \end{pmatrix}
$$
is bounded by $C/N$, where $C>0$ is an absolute constant not depending on $N$.  Note that if a function $k:S^1\to \C$ is Lipschitz with Lipschitz constant $\text{Lip}(k)$ , we have that 
$$
\|[k,b]\|=\|bkb^*-k\|=\sup_{x\in [0,1]}|k(e^{2\pi ix})-k(2\pi i (x+1/N))|\leq \frac{2\pi\text{Lip}(k)}{N}.
$$
This, combined with the fact that $\|h\|\leq 1$ implies that 
$$
\Big\|\begin{pmatrix} h[b,g]+[g,b^*]h & h[f,b] \\ [u^*,f]h & u^*(h[h,b]+[h,b]h)\end{pmatrix}\Big\|\leq \frac{4\pi}{N}(\text{Lip}(f)+\text{Lip}(g)+\text{Lip}(h))
$$
and we are done with this step.\\

\textbf{Step 2:} $\text{tr}(\chi(e_N))-\text{tr}(3e_N^2-2e_N^3)\to 0$ as $N\to \infty$.

The result of step one says that there is a constant $C>0$ such that the eigenvalues of $e_N$ are all within $C/N$ of either one or zero for all large $N$.  The function $p(x)=3x^2-2x^3$ has the property that $p(0)=0$, $p(1)=1$, and $p'(0)=p'(1)=0$.  Hence there is a constant $D$ such that the eigenvalues of $p(e_N)$ are all within $D/N^2$ of either $0$ or $1$.  It follows that $\|\chi(e_N)-p(e_N)\|\leq D/N^2$.  Hence 
$$
|\text{tr}(\chi(e_N))-\text{tr}(p(e_N))|=|\text{tr}(\chi(e_N)-p(e_N))|\leq 2N\|\chi(e_N)-p(e_N)\|\leq \frac{2D}{N}.
$$
This tends to zero as $N\to\infty$, completing the proof of step 2.\\

\textbf{Step 3:} $\text{tr}(e_N^2)=N$ for all $N$.

Using the formula in line \eqref{e2 form} (with the same notational conventions used there) and rearranging a little, we get
$$
\text{tr}(e^2)=\text{tr}(e)+\text{tr}(hbg+gb^*h)+\text{tr}(b^*h^2b-h^2).
$$
From the formula for $e$, we get that $\text{tr}(e)=N$.  The second two terms are both zero using the trace property, and that $gh=0$.\\

\textbf{Step 4:} $\text{tr}(e_N^3)-(N-\frac{1}{2})\to 0$ as $N\to\infty$.

Again, we use the formula in line \eqref{e2 form}, multiplied by $e=e_N$ to see that 
$$
\text{tr}(e^3)=\text{tr}(e^2)+\text{tr}\Big(e\begin{pmatrix} hbg+gb^*h & [f,hb] \\ [b^*h,f] & b^*hb-h^2 \end{pmatrix}\Big).
$$
The first term is $N$ using step 3.  Multiplying the second term out, simplifying using the trace properties and that $gh=0$, we see that the trace of the second term equals 
$$
\text{tr}(3h^2(f-bfb^*))=3\sum_{k=0}^{N-1} h(e^{2\pi ik/N})^2(f(e^{2\pi i k/N})-f(e^{2\pi i(k-1)/N})).
$$
Assuming (as we may) that $f$ is differentiable, this converges as $N$ tends to infinity to 
$$
3\int_0^1 h(x)^2f'(x)dx.
$$ 
Using that $h=0$ on $[0,1/2]$, and that $h=f-f^2$ on $[1/2,1]$ (plus the precise form for $f$ in \cite[pages 10-11]{Loring:1985ud}) we get that 
$$
3\int_{\frac{1}{2}}^1 (f(e^{2\pi ix})-f(e^{2\pi ix})^2)f'(e^{2\pi ix})dx=3\int_0^1 \lambda-\lambda^2 d\lambda = \frac{1}{2}.
$$
This completes the proof of step 4.

Combining steps 2, 3, and 4 completes the argument and we are done.  
\end{proof}

\section{Connection with the localization algebra}

Homomorphisms $\alpha_A$ with the properties required by Proposition \ref{atiyah rot} are maybe more usually defined using differential, or Toeplitz, operators associated to the circle.  There is a direct connection between this picture and our unitaries $u_t$, which we now explain; we will not give complete proofs here as this would substantially increase the length of the paper, but at least explain the key ideas and connections.  

The following definition is inspired by work of Yu \cite{Yu:1997kb} in the case that $A$ is commutative.  It agrees with Yu's original definition for unital commutative $C^*$-algebras.

\begin{definition}\label{loc alg}
Let $A$ be a $C^*$-algebra, and assume that $A$ is represented nondegenerately and essentially\footnote{This means that no non-zero element of $A$ acts as a compact operator, so in particular, such a representation is faithful.} on some Hilbert space $H$.  Let $C_{ub}([1,\infty),\mathcal{B}(H))$ denote the $C^*$-algebra of bounded uniformly continuous functions from $[1,\infty)$ to $\mathcal{B}(H)$.  The \emph{localization algebra} of $A$ is defined to be
$$
C_L^*(A):=\left\{ \begin{array}{l|l} f\in C_{ub}([1,\infty),\mathcal{B}(H)) &  f(t)a\in \mathcal{K}(H) \text{ for all }t\in [1,\infty),a\in A \\ &\text{ and } \|[f(t),a]\|\to 0 \text{ for all } a\in A\end{array}\right\}.
$$
If $A=C_0(X)$ is commutative, we will write $C_L^*(X)$ instead of $C^*_L(C_0(X))$.
\end{definition}

The localization algebra of a $C^*$-algebra $A$ does not depend on the choice of essential representation $H$ up to non-canonical isomorphism, and its $K$-theory does not depend on $H$ up to canonical isomorphism (these remarks follow from \cite[Theorem 2.7]{Dadarlat:2016qc}); thus we say `the' localization algebra of $A$, even thought this involves a slight abuse of terminology.  Moreover, building on work of Yu \cite{Yu:1997kb} and Qiao-Roe \cite{Qiao:2010fk} in the commutative case, \cite[Theorem 4.5]{Dadarlat:2016qc} gives a canonical isomorphism 
$$
K^*(A)\to K_*(C^*_L(A))
$$
from the $K$-homology groups of $A$ to the $K$-theory groups of $C^*_L(A)$ (at least when $A$ is separable).  

One can define a pairing 
$$
K_i(C_L^*(A))\otimes K_j(A)\mapsto \Z
$$
between the $K$-theory of the localization algebra (i.e.\ $K$-homology) and the $K$-theory of a $C^*$-algebra $A$.  The most complicated case, and also the one relevant to the current discussion  of this occurs when $i=j=1$, so let us focus on this.  Let $(u_t)_{t\in [1,\infty)}$ be a unitary in the unitization of $C^*_L(A)$ and $v$ in the unitization of $A$ be another unitary (the construction also works with matrix algebras involved in exactly the same way, but we ignore this for notational simplicity).  Let $H$ be the Hilbert space used in the definition of the localization algebra, and for $t\in [1,\infty)$, let  
$$
e(u_t,v)\in M_2(\mathcal{B}(H))
$$
be the Loring element of line \eqref{loring elt}.  One can check that for all large $t$, the spectrum of $e(u_t,v)$ does not contain $1/2$.  Hence if $\chi$ be the characteristic function of $[1/2,\infty)$, we get a difference
$$
\chi(e(u_t,v))-\begin{pmatrix} 1 & 0 \\ 0 & 0 \end{pmatrix}\in M_2(\mathcal{B}(H))
$$
of projections for all suitably large $t$.  It is moreover not difficult to check that this difference is in $M_2(\mathcal{K}(H))$, and thus defines an element in $K_0(\mathcal{K}(H))\cong \Z$, which does not depend on $t$ for $t$ suitably large.  We may thus define 
$$
\langle [u_t],[v]\rangle:=[e(u_t,v)]-\begin{bmatrix} 1 & 0 \\ 0 & 0 \end{bmatrix} \in K_0(\mathcal{K})\cong \Z
$$
for any suitably large $t$.  One checks that this formula gives a well-defined pairing between $K_1(A)$ and $K^1(A)$.  More substantially, one can check that it agrees with the canonical pairing between $K$-homology and $K$-theory (at least up to sign conventions).  

Let us go back to the case of interest for Bott periodicity.  In terms of elliptic operators, one standard model for the canonical generator of the $K$-homology group $K_1(S^1)$ of the circle is the class of the Dirac operator $D=\frac{-i}{2\pi}\frac{d}{d\theta}$, where $\theta$ is `the' angular coordinate.  We consider $D$ as an unbounded operator on $L^2(S^1)$ with domain the smooth functions $C^\infty(S^1)$.  Let $\chi:\R\to [-1,1]$ be any continuous function such that $\lim_{t\to\pm\infty}\chi(t)=\pm 1$, and for $t\in [1,\infty)$, define
$$
F_t:=\chi(t^{-1}D)
$$
using the unbounded functional calculus.  Concretely, each $F_t$ is diagonal with respect to the canonical orthonormal basis $\{\delta_n\mid n\in \Z\}$, acting by 
$$
F_t:\delta_n\mapsto \chi(t^{-1}n)\delta_n~;
$$
this follows as $D:\delta_n\mapsto n\delta_n$ for each $n$.  

Using the above concrete description of the eigenspace decomposition of $F_t$, it is not too difficult to show that the function $t\mapsto F_t$ defines an element of the multiplier algebra $M(C^*_L(S^1))$ of the localization algebra $C_L^*(S^1)$.  Moreover, one checks similarly that the function $t\mapsto \frac{1}{2}(F_t+1)$ maps to a projection in $M(C_L^*(S^1)) / C_L^*(S^1)$, and thus defines a class $[D]_0\in K_0(M(C_L^*(S^1))/C_L^*(S^1))$.  Yu defines the $K$-homology class associated to this operator to be the image $[D]$ of $[D]_0$ under the boundary map
$$
\partial : K_0(M(C_L^*(S^1))/C_L^*(S^1)) \to K_1(C_L^*(S^1))
$$
(all this is part of a very general machine for turning elliptic differential operators into elements of $K_*(C^*_L(S^1))$: see \cite[Chapter 8]{Willett:2010ay}).  This boundary map is explicitly computable (compare for example \cite[Section 12.2]{Rordam:2000mz}): the image of $[D]_0$ under this map is the class of the unitary
$$
e^{2\pi i\frac{1}{2}(F_t+1)}=-e^{\pi i \chi(t^{-1}D)}.
$$
Choosing $\chi$ to be the function which is negative one on $(-\infty,0]$, one on $[1,\infty)$, and that satisfies $\chi(t)=2t-1$ on $(0,1)$, we see that 
\begin{equation}\label{ut form}
-e^{\pi i \chi(t^{-1}D)}=u_t
\end{equation}
i.e.\ the canonical generator of the $K$-homology group $K_1(S^1)$ is given precisely by the class of $u_t$ in $K_1(C_L^*(S^1))$.  

The formula for $\alpha_A$ is then a specialization of the general formula above for the pairing, using the element of line \eqref{ut form} (more precisely, its amplification to $C(S^1,A)$ in an obvious sense).  Thus our proof of Bott periodicity using almost commuting matrices fits very naturally into the localization picture of $K$-homology.

\bibliography{Generalbib}

\end{document}